\def\be{\begin{equation}}
\def\ee{\end{equation}}
\def\lbl{\label}
\newtheorem {thm}{Theorem}
\newtheorem {proposition}[thm]{Proposition}
\begin{document}

\title{On tails of perpetuities}
\author[Pawe{\l} Hitczenko]{Pawe{\l} Hitczenko$^\dag{}$}
\address{Pawe{\l} Hitczenko\\
Departments of Mathematics and Computer Science  \\
Drexel University\\
Philadelphia, PA 19104 \\
U.S.A}
\email{phitczenko@math.drexel.edu}
\urladdr{
http://www.math.drexel.edu/$\sim$phitczen}

\thanks{$^\dag{}$ Supported in part by the NSA grant  \#H98230-09-1-0062}

\title{On tails of perpetuities} 

\begin{abstract}
We establish an upper bound on the tails of a random variable that arises as a solution of a stochastic difference equation. In the non--negative case our bound is similar  to a lower bound obtained by Goldie and Gr\"ubel in 1996. 
\end{abstract}

\vspace{1cm}

\keywords{
perpetuity, stochastic difference equation, tail behavior}

\subjclass{60E15,60H25}

\maketitle

\section{Introduction} 

A random variable $R$ satisfying the distributional identity
\be\lbl{eq:r}R\stackrel d=MR+Q,\ee
where $(M,Q)$ are independent of $R$ on the right-hand side and
$\stackrel d=$ denotes the equality in distribution, is referred to as
perpetuity and plays an important role in applied probability. The
main reason for this is that  it
appears as a limit in distribution of a sequence $(R_n)$ given by 
\[R_n\stackrel d= M_nR_{n-1}+Q_n,\quad n\ge1,\]
provided that limit exists (here, $(M_n,Q_n)$ is a sequence of i.i.d.  random vectors
distributed like $(M,Q)$ and $R_0$ could be an arbitrary random variable; for convenience we will set $R_0=0$).
Systematic study of properties of such sequences  was initiated by
Kesten in \cite{kesten} and they continue till this day. Once the
convergence in distribution of $(R_n)$ is established, at the center
of the investigation is the tail behavior of $R$. There are two
distinctly different cases:
\[P(|M|>1)>0\quad\mbox{and}\quad P(|M|\le1)=1.\]
The first results in $R$ having  a heavy tail distribution, that is 
\[P(|R|>x)\sim Cx^{-\kappa},\]
for a suitably chosen constant $\kappa$ and some constant $C$  (see the
original paper of   Kesten \cite{kesten} or \cite{goldie}), while in
the second case the tails of $R$ are no heavier than exponential. This
was observed by Goldie and Gr\"ubel in \cite{gg}. Some subsequent work
is in \cite{hw}, but the full picture in this case is not
complete. The purpose of this note is to shed some additional light
on that case by establishing a universal upper bound on the tails of $|R|$. In a special, but important, situation when  $Q$ and  $M$ (and thus also $R$) are non--negative our bound is comparable to a lower bound obtained by Goldie and Gr\"ubel in \cite{gg}.

\section{Bounds on the tails}
For a random variable $M$ such that $|M|\le 1$ and $0<\delta<1$ define  $p_\delta:=P(1-\delta\le |M|\le1)$. Then, as has been shown in \cite{gg} (see also the equation (2.2) in
\cite{hw}) if $0\le M\le 1$ and $Q\equiv q$ ($q$ being a positive
constant), then for $0<c<1$ and $x>q$ we have
\[P(R> x)\ge \exp(\frac{\ln(1-c)}{\ln(1-cq/x)}\ln p_{cq/x}).\]
Since $\ln(1-cq/x)\le -cq/x$,  for any particular value of $c$, say  $c=1/2$,  this immediately gives
\[P(R> x)\ge \exp(-\frac{\ln(1-c)}{cq}
x\ln(p_{cq/x}))=\exp(\frac{2\ln2}q x\ln p_{q/(2x)}).\]
Our aim here is to supply an upper bound of a similar form. 
 While our result does not give the asymptotics of $P(R>x)$
as $x\to\infty$, it shows that it essentially behaves like $\exp(\frac{c_1}q
x\ln p_{c_2q/x})$ for some positive constants $c_1,c_2$.
Specifically, we prove 

\begin{proposition}Assume $|Q|\le q$, $|M|\le1$, and let $R$ be given by
\eqref{eq:r}. Then, for sufficiently large $x$
\[P(|R|> x)\le\exp(\frac 1{4q}x\ln p_{2q/x}).  
\]
Thus, if $Q\equiv q>0$ and $0\le M\le 1$ then 
\[\exp(\frac{2\ln2}q x\ln p_{q/(2x)}) \le P(R> x)\le\exp(\frac1{4q} x\ln p_{2q/x}).  
\]
\end{proposition}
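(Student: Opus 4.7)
The strategy is to iterate the distributional identity $R\stackrel{d}{=}MR+Q$ and to translate a large value of $|R|$ into a constraint on the $|M_j|$'s. Iterating yields, for any $n\ge 1$,
\[
R \stackrel{d}{=} S_n + \Pi_n R^{(n)},\qquad S_n := \sum_{k=1}^n Q_k \prod_{j=1}^{k-1} M_j,\qquad \Pi_n := \prod_{j=1}^n M_j,
\]
with $(M_j,Q_j)$ i.i.d.\ copies of $(M,Q)$ and $R^{(n)}\stackrel{d}{=} R$ independent of $(M_j,Q_j)_{j\le n}$. Since $|Q_j|\le q$ and $|\Pi_k|\le 1$, one has $|S_n|\le qn$ deterministically. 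I would take $n:=\lfloor x/(4q)\rfloor$, so that $|S_n|\le x/4$ and the event $\{|R|>x\}$ is contained in $\{|\Pi_n||R^{(n)}|>3x/4\}$.

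The key structural observation is the pointwise inequality $|\Pi_n|=\prod_{j\le n}|M_j|\le \min_{j\le n}|M_j|$, which forces $\{|\Pi_n|>1-\delta\}\subseteq\{|M_j|>1-\delta,\ \forall\,j\le n\}$; by independence, $P(|\Pi_n|>1-\delta)\le p_\delta^n$. Choosing $\delta:=2q/x$ aligns matters with the target, since $p_\delta^n=p_{2q/x}^{x/(4q)}$. Decomposing $\{|R|>x\}$ by whether $|\Pi_n|>1-\delta$ and observing that on the complementary event $|R|\le qn+(1-\delta)|R^{(n)}|$ forces $|R^{(n)}|>3x/(4(1-\delta))$, together with the independence of $R^{(n)}$ from $(M_j,Q_j)_{j\le n}$, yields
\[
F(x) := P(|R|>x) \;\le\; p_{2q/x}^{x/(4q)} \;+\; F\!\Bigl(\tfrac{3x}{4(1-\delta)}\Bigr).
\]

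The main obstacle is absorbing the remainder. Because $3x/(4(1-2q/x))<x$ for $x\gg q$, the argument of $F$ on the right is strictly smaller than $x$, so a bare recursion does not close. The plan is to combine the above inequality with a preliminary, cruder tail estimate for $|R|$---for instance an exponential bound $P(|R|>y)\le Ce^{-\lambda y}$ obtainable by a Chernoff argument applied to the majorizing perpetuity $T=\sum_{k\ge 0}\Pi_k$ (since $|R|\le qT$) via the functional identity $T\stackrel{d}{=}1+|M|T'$. Applied at $y=3x/(4(1-\delta))$, this preliminary bound shows that the remainder is subordinate to the leading $p_{2q/x}^{x/(4q)}$ once $x$ is large enough, and the asymptotic qualifier "for sufficiently large $x$" is precisely what accommodates the matching of constants in this final absorption step.
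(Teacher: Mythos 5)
Your reduction to the recursion
\[
F(x)\;\le\;p_{2q/x}^{\,\lfloor x/(4q)\rfloor}\;+\;F\Bigl(\tfrac{3x}{4(1-\delta)}\Bigr),\qquad \delta=2q/x,
\]
is correct as far as it goes, but the proposed way of closing it fails, and this is a genuine gap rather than a detail. In the only nontrivial case, $P(|M|=1)=0$ (the paper dispatches the atom case by citing Goldie--Gr\"ubel), we have $p_{2q/x}\to0$ as $x\to\infty$, so the target bound $\exp\bigl(\frac{x}{4q}\ln p_{2q/x}\bigr)$ decays \emph{faster than any exponential} in $x$. Your preliminary Chernoff bound $F(y)\le Ce^{-\lambda y}$ applied at $y\approx 3x/4$ therefore gives a remainder that is only exponentially small, hence vastly \emph{larger} than the leading term $p_{2q/x}^{x/(4q)}$ for large $x$; it is not subordinate to it, and the sum of the two terms is of order $e^{-3\lambda x/4}$, far above the claimed bound. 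Nor can the recursion be bootstrapped: each application shrinks the scale by a factor of about $3/4$, and since the desired bound is super-exponential, the bound at scale $3x/4$ is enormously weaker than the bound needed at scale $x$ (the ratio $G(3x/4)/G(x)\to\infty$ for $G(x)=\exp(\frac{x}{4q}\ln p_{2q/x})$). Iterating down to scales of order $q$ just produces a sum dominated by its last, order-one terms. The structural reason is that conditioning on the single event "all of $|M_1|,\dots,|M_n|$ exceed $1-\delta$'' with $n\approx x/(4q)$ discards too much: a large value of $|R|$ does not require every early multiplier to be near $1$, only that multipliers below $1-\delta$ occur rarely along the \emph{whole} sequence. (There is also a smaller issue: with $n=\lfloor x/(4q)\rfloor$ your leading term already exceeds $\exp(\frac{x}{4q}\ln p_{2q/x})$ by a factor up to $1/p_{2q/x}$, so even the constants would need adjusting.)

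The paper's proof avoids the remainder problem altogether by never splitting off $\Pi_nR^{(n)}$: it bounds $|R|\le q\sum_{k\ge1}\prod_{j<k}|M_j|$ and dominates the entire series pathwise by $\sum_{k\ge1}(1-\delta)^{k-1}T_k$, where $T_1,T_2,\dots$ are the i.i.d.\ geometric waiting times between successive indices with $|M_j|\le1-\delta$. A Chernoff bound for this weighted sum of geometric variables, optimized in $\lambda$ and evaluated at $\delta=2q/x$, then yields the stated inequality directly. If you want to salvage your line of attack, you would need to replace the all-or-nothing event $\{|\Pi_n|>1-\delta\}$ by a count of how many of the first $n$ (indeed, of all) multipliers fall below $1-\delta$ --- which is essentially the renewal/geometric structure the paper exploits.
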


\begin{proof} If $P(|M|=1)>0$ then,  as was proved in \cite{gg}, $R$ has tails bounded by those of an
exponential variable, so we assume  that $|M|$ has no atom at 1. Fix $0<\delta<1$  and 
define a sequence  $(T_k)$ as follows
\[T_0=0,\quad T_m=\inf\{k\ge1:\ |M_{T_{m-1}+k}|\le1-\delta\},\quad
m\ge1.\]
Then $T_k$'s are i.i.d. random variables, each having a
geometric distibution with parameter $1-p_\delta$. Furthermore,
$|M_k|\le1-\delta$ if $k=T_1+\dots+T_i$ for some $i\ge1$ and $|M_k|\le1$
otherwise. Therefore, 
\[\prod_{k=1}^m|M_k|\le (1-\delta)^{j}\quad{\rm for}\quad
T_1+\dots+T_j\le m<T_1+\dots+T_j+T_{j+1}.\]
This in turn implies that 
\[\Big|\sum_{k\ge1}\prod_{j=1}^{k-1}M_j\Big|\le
\sum_{k\ge1}\prod_{j=1}^{k-1}|M_j|\le
T_1+(1-\delta)T_2+(1-\delta)^2T_3+\cdots=\sum_{k\ge1}(1-\delta)^{k-1}T_k.\]
Therefore, if $|Q|\le q$  we get
\be\lbl{eq:up_bdd}P(|R|>x)\le P(\sum_{k\ge1}\prod_{j=1}^{k-1}|M_j|\ge\frac xq)\le
P(\sum_{k\ge1}T_k(1-\delta)^{k-1}\ge \frac xq).\ee
To bound the latter probability we use a widely known argument (our
calculations follow
\cite[proof of Proposition~2]{gh}). First, if $T$ is a geometric variable with parameter
$1-p$ then 
\[Ee^{\lambda T}=\sum_{j=1}^\infty e^{\lambda j}P(T=j)=
\sum_{j=1}^\infty e^{\lambda
  j}p^{j-1}(1-p)=\frac{e^\lambda(1-p)}{1-e^\lambda
  p}=\frac{e^\lambda}{1-\frac p{1-p}(e^\lambda-1)},\]
provided $e^\lambda p<1$. Thus, writing $t$ in place of $x/q$ in the right-hand side of
\eqref{eq:up_bdd}, for $\lambda>0$ we have
\[P(\sum_{k\ge1}(1-\delta)^{k-1}T_k\ge
t)=P(\exp(\lambda\sum_{k\ge1}(1-\delta)^{k-1}T_k)\ge e^{\lambda t})\le
e^{-\lambda t}Ee^{\lambda\sum_{k\ge1}T_k(1-\delta)^{k-1}}.\]
If $\lambda$ satisfies $e^\lambda p<1$
then $pe^{\lambda(1-\delta)^{k-1}}<1$ for every $k\ge 1$ as well, and by independence of $(T_k)$,  the expectation on the right is 
\be\lbl{eq:prod}\prod_{k=1}^\infty\frac{e^{\lambda(1-\delta)^{k-1}}}{1-\frac p{1-p}(e^{\lambda(1-\delta)^{k-1}}-1)}=e^{\lambda/\delta}\prod_{k=1}^\infty\frac1 {1-\frac p{1-p}(e^{\lambda(1-\delta)^{k-1}}-1)}.\ee
Now, choose $\lambda>0$ so that $\frac p{1-p}(e^\lambda-1)\le
\frac12$. Then, as $1/(1-u)\le e^{2u}$ for $0\le u\le 1/2$, for every $k\ge1$ we get 
\[\frac1{1-\frac p{1-p}(e^{\lambda(1-\delta)^{k-1}}-1)}\le \exp(2\frac
p{1-p}(e^{\lambda(1-\delta)^{k-1}}-1))
.\] 
Therefore, the rightmost product in \eqref{eq:prod} is bounded by 
\[\exp(2\frac
p{1-p}\sum_{k\ge1}(e^{\lambda(1-\delta)^{k-1})}-1)).
\]
We bound the sum in the exponent  as follows 
\begin{eqnarray*}&&\sum_{k\ge1}\sum_{j\ge1}\frac{\lambda^j(1-\delta)^{(k-1)j}}{j!}
=\sum_{j\ge1}\frac{\lambda^j}{j!}\sum_{k\ge1}(1-\delta)^{j(k-1)}
\\&&\qquad\quad
=\sum_{j\ge1}\frac{\lambda^j}{j!}\frac1{1-(1-\delta)^j}
\le\frac1\delta\sum_{j\ge1}\frac{\lambda^j}{j!}
=\frac{e^\lambda-1}\delta.
\end{eqnarray*}
Combining the above estimates we get that 
\be\lbl{eq:up_bdd2}P(|R|>qt)\le
\exp(-t\lambda+\frac\lambda\delta+\frac{2p}{1-p}\frac{e^\lambda-1}\delta).
\ee
provided that $\lambda$ satisfies the required conditions, that is:
\[e^\lambda p<1\quad \mbox{and}\quad \frac
p{1-p}(e^\lambda-1)\le\frac12.\] 
Clearly both are satisfied when $e^\lambda p\le1/2$. 

We finish the proof by  making a
suitable choice of $\lambda$.
Since we are
assuming that $|M|$ has no atom at 1 and we are interested in large $x$, we
may assume that $\delta$ is small enough so that $p_\delta<1/3$.
This condition 
implies that $2p_\delta/(1-p_\delta)<3p_\delta$  so
that the last term in the exponent of \eqref{eq:up_bdd2} is bounded by
$3p_\delta(e^\lambda-1)/\delta$. 
Now let 
$t=2/\delta$. Then
\eqref{eq:up_bdd2} becomes
\[P(|R|>qt)\le\exp(-\lambda\frac 2\delta+\frac\lambda\delta+\frac{2p_\delta}{1-p_\delta}\frac{e^\lambda-1}\delta)\le\exp(-\frac1\delta(\lambda-3p_\delta(e^\lambda-1))).
\]
Set  $\lambda=\ln(\frac1{3p_\delta})$ so that $e^\lambda
p_\delta=\frac13$.  
This choice of $\lambda$ is within the constraints and maximizes the
value of
$\lambda-3p_\delta(e^\lambda-1)$, this maximal value being 
\[\ln(\frac1{3p_\delta})-3p_\delta(\frac1{3p_\delta}-1)=\ln(\frac1{p_\delta})-(1+\ln3)+3p_\delta\ge\frac12\ln(1/p_\delta),
\]  
with the inequality valid for sufficiently small $p_\delta$ (less than $e^{-2}/9$ for
example). Thus, using $t=2/\delta$ we finally obtain
\[P(|R|>qt)\le\exp(-\frac1{2\delta}\ln(1/p_\delta))=\exp(\frac t4\ln p_{2/t}),
\]
or, in terms of $x$,
\[P(|R|>x)\le \exp(\frac x{4q}\ln p_{2q/x}).
\]
\end{proof}

\end{document}